\providecommand{\R}{\mathbb{R}}
\newcommand{\Fc}{\mathcal{F}}
\newcommand{\shfa}{\Fc}
\providecommand{\constshf}[1]{\underline{#1}}
\providecommand{\abs}[1]{\left\lvert#1\right\rvert}
\providecommand{\norm}[1]{\lVert{#1}\rVert}
\providecommand{\indv}[1]{\mathds{1}_{#1}}
\DeclareMathOperator{\tr}{tr}
\DeclareMathOperator{\im}{im}
\providecommand{\ip}[1]{\langle #1 \rangle}
\DeclareMathOperator{\vol}{vol}
\DeclareMathOperator{\face}{\trianglelefteqslant}
\providecommand{\introduce}[1]{\emph{#1}}
\newtheorem{lem}{Lemma}[section]
\newtheorem{prop}{Proposition}[section]
\theoremstyle{definition}
\newtheorem{defn}{Definition}[section]
\def\thm@space@setup{%
  \thm@preskip=\parskip \thm@postskip=0pt
}
\title{Expansion in Matrix-Weighted Graphs}
\author{Jakob Hansen}
\date{}						
\begin{document}

\begin{abstract}
A matrix-weighted graph is an undirected graph with a $k\times k$ positive
semidefinite matrix assigned to each edge. There are natural generalizations of
the Laplacian and adjacency matrices for such graphs. These matrices can be used
to define and control expansion for matrix-weighted graphs. In particular, an
analogue of the expander mixing lemma and one half of a Cheeger-type inequality
hold for matrix-weighted graphs. A new definition of a
matrix-weighted expander graph suggests the tantalizing possibility of families
of matrix-weighted graphs with better-than-Ramanujan expansion.
\end{abstract}
\renewcommand{\footskip}{60pt}
\maketitle
\frenchspacing

\section{Introduction}

A recent thread of investigation in spectral graph theory has been its extension
to higher dimensions. This extension may take place by raising the
dimensionality of the underlying structure, as with the spectral theory for
simplicial complexes and
hypergraphs~\cite{parzanchevski_high_2013,steenbergen_towards_2013,cooper_spectra_2012,louis_hypergraph_2015}.
However, it is also possible to raise the dimension of the algebraic components
of interest: rather than consider $\R$-valued functions on the vertices of a
graph, consider functions valued in higher-dimensional spaces. This extension
allows us to define new classes of graph operators. The most famous of these is
perhaps the \introduce{graph connection Laplacian}, which introduces a weighted
orthogonal transformation corresponding to each edge. This has been used for
dimensionality reduction and data
analysis~\cite{singer_vector_2012,wu_embedding_2017}, and various theoretical
results including a Cheeger-type inequality~\cite{bandeira_cheeger_2013},
sparsification algorithms~\cite{zhao_ranking_2014,kyng_sparsified_2016}, and
results on the spectrum of random connection
Laplacians~\cite{el_karoui_graph_2015}.

A somewhat less well known higher-dimensional generalization is the
\introduce{matrix-weighted graph}. Rather than assign an orthogonal matrix to
each edge, a matrix-weighted graph assigns a positive semidefinite matrix to
each edge. Matrix-weighted Laplacians in particular have seen development and
use in the design and control of engineering
systems~\cite{tuna_synchronization_2016, tuna_observability_2018,
  trinh_matrix-weighted_2018}.

Both connection graphs and matrix-weighted graphs can be seen as special cases of
\introduce{celular sheaves}~\cite{curry_sheaves_2014}. These are algebraic
structures attached to a graph (or higher-dimensional base space) that describe
consistency constraints for data parameterized by the graph. In particular,
graph connection Laplacians and matrix-weighted Laplacians are instances of
\introduce{sheaf Laplacians}~\cite{hansen_toward_2019}. The cellular sheaf
perspective can shed light on various phenomena arising in these more restricted
domains.

This paper focuses on understanding the expansion properties of matrix-weighted
graphs. Of the higher-dimensional extensions of graphs, these have the behavior
most similar to that of standard graphs. (It is not entirely clear what an
appropriate definition of expansion is for connection graphs or other types of
cellular sheaves.) Still, there are a number of subtle differences that add
additional richness and interest to the theory in the matrix-weighted case.

We will first define matrix-weighted graphs and their
paraphernalia---degrees, Laplacians, adjacency matrices, etc., as a
generalization of standard objects from graph theory. We then introduce
cellular sheaves and describe how matrix-weighted graphs are realized as
sheaves. After a few examples, we explore the relationship between the spectra
of matrix-weighted graphs and certain associated scalar-weighted graphs. We then
prove a version of the expander mixing lemma for matrix-weighted graphs, as well
as one half of a Cheeger inequality for regular matrix-weighted graphs, and show
that the complementary inequality cannot hold. Finally, we propose a definition
of a matrix-weighted expander graph and discuss its implications.

\section{Matrix-Weighted Graphs}
\subsection{Definitions}
We will view a weighted graph as a structure built on top of an underlying
unweighted, undirected graph. Let $G$ be a graph with vertex set $V$ and edge
set $E$. We will write $v \face e$ for the vertex-edge incidence relation; that
is, $v \face e$ if $v$ is one of the endpoints of the edge $e$. A
\introduce{weighting} on $G$ is a function $w: E \to \R$, whose values we write
$w_{e}$ for $e \in E$, such that $w_{e} \geq 0$. For an edge $e = u \sim v$, we
write $w_{uv} = w_e = w_{vu}$, and we can extend this by letting $w_{uv} = 0$
whenever there is no edge between $u$ and $v$. One may represent a weighted
graph by its adjacency matrix $A$, whose rows and columns are indexed by $V$,
which has $A_{uv} = w_{uv}$. The weighted degree of a vertex $v$ is $d_v =
\sum_{v \face e} w_{e} = \sum_{u \in V} w_{uv}$. The adjacency matrix determines
and is determined by the weighted Laplacian matrix $L = D - A$, where $D$ is the
diagonal matrix whose entries are the weighted degrees.

\introduce{Matrix-weighted graphs} are a generalization of this structure. Rather
than assigning a nonnegative scalar $w_{e}$ to each edge, we assign
a $k \times k$ symmetric
positive semidefinite matrix $W_{e}$. We can equivalently
specify this as a 
symmetric function on pairs of vertices as before, letting $W_{uv} = W_{e}$ for
$e = u \sim v$ and $W_{uv} = 0$ if there is no edge between $u$ and $v$. A
matrix-weighted graph may again be represented by its adjacency matrix. This is
a block matrix with $k \times k$ blocks, whose block rows and columns are
indexed by $V$, and where $A_{uv} = W_{uv}$. There is also a corresponding
matrix-weighted Laplacian matrix $L = D - A$, defined blockwise analogously to
the scalar-weighted version, with the degree matrix $D$ having blocks on the
diagonal equal to the block row sums of $A$. These matrices are interesting as generalizations
of the constructions familiar from spectral graph theory. 

We think of the matrix-weighted versions of the adjacency and Laplacian matrices
as linear operators on the space of functions $V \to \R^k$. That is, these
operators take as input an assignment of a vector in $\R^k$ to each vertex of
$G$ and output an assignment of the same form. The action of a
general matrix-weighted adjacency matrix or Laplacian on $(\R^k)^V$ may be written vertexwise as
\begin{align}
  (Ax)_v &= \sum_{u \in V} W_{uv} x_u \\
  (Lx)_v &= \sum_{u \in V} W_{uv}(x_v - x_u),
\end{align}
where we note that this is an expression relating vectors in $\R^k$.
From this expression, it is easy to see that the kernel of $L$ is at least
$k$-dimensional, for it contains all constant functions $V \to \R^k$. If $G$ is
not connected, the kernel of $L$ contains a direct summand of dimension $k$
corresponding to each connected component of $G$. However, even if $G$ is
connected, the kernel of $L$ may be more than $k$-dimensional. The matrix $L$ is
positive semidefinite, as will be easy to see by considerations in
Section~\ref{sec:cellularsheaves}. Therefore, if we write its eigenvalues in
increasing order, we have $0 = \lambda_1 = \cdots = \lambda_k \leq \lambda_{k+1}
\leq \cdots$.

We will say that a matrix-weighted graph is \introduce{regular} if the
vertexwise degree matrix $D_{v} = \sum_{u \in V} W_{uv}$ is the same for every
vertex $v$. When necessary to avoid confusion, we will call $D_v$ the
\introduce{algebraic degree}, and the degree of the vertex in the underlying
graph the \introduce{geometric degree}. The ``most regular'' matrix-weighted
graphs have algebraic degree equal to $dI$ for some $d \in \R$; by an abuse of
notation we will call these $d$-regular matrix-weighted graphs. The adjacency
and Laplacian spectra of a $d$-regular matrix weighted graph have related
eigenvalues: since the total degree matrix $D$ is equal to $dI$, the eigenvalues of
$A$ are $\mu_i = d - \lambda_i$.

Just as with weighted graphs, it is often useful to normalize the Laplacian and
adjacency matrices of matrix-weighted graphs. Since the degree matrices are
positive semidefinite, they have square roots; we define the normalized
Laplacian to be $\tilde L = D^{\dagger/2} LD^{\dagger/2}$, where $D^{\dagger/2}$
is the Moore-Penrose pseudoinverse of the square root of the degree matrix. We
likewise define the normalized adjacency matrix to be $\tilde A = D^{\dagger/2}A
D^{\dagger/2} = I - \tilde L$. If $D$ is invertible, the block diagonal entries
of $\tilde L$ are copies of the $k\times k$ identity matrix. However, the
off-diagonal block entries are not in general symmetric.


The scalar normalized Laplacian is useful in part because its spectrum is
bounded above by a constant regardless of the size or degree distribution of the
graph. The same holds for the matrix-weighted normalized Laplacian.

\begin{prop}\label{prop:normalizedbound}
  The eigenvalues of the normalized Laplacian of a matrix-weighted graph are
  bounded above by 2. 
\end{prop}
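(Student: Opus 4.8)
The plan is to reduce the claim to a positive-semidefiniteness statement about the unnormalized Laplacian and then transport it through the conjugation by $D^{\dagger/2}$. First I would record the quadratic form of $L$. Summing the vertexwise expression $(Lx)_v = \sum_u W_{uv}(x_v - x_u)$ against $x_v$ and collecting the two contributions of each edge $e = u \sim v$ gives
\begin{equation*}
  x^\top L x = \sum_{e = u \sim v} (x_u - x_v)^\top W_e (x_u - x_v),
\end{equation*}
which re-exhibits $L$ as positive semidefinite since each $W_e \succeq 0$. (This same identity underlies the PSD claim promised from Section~\ref{sec:cellularsheaves}.)

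The heart of the argument is the companion identity for $2D - L = D + A$. The same edge-by-edge bookkeeping, now summing $x_v^\top W_{uv}(x_v + x_u)$ and using $W_{uv}=W_{vu}$, yields
\begin{equation*}
  x^\top (2D - L) x = x^\top (D + A) x = \sum_{e = u \sim v} (x_u + x_v)^\top W_e (x_u + x_v) \geq 0.
\end{equation*}
Hence $L \preceq 2D$ as operators on $(\R^k)^V$; this is exactly the matrix-weighted analogue of the statement that the signless Laplacian $D+A$ is positive semidefinite, which in the scalar case is what forces the normalized spectrum below $2$.

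Finally I would conjugate. Note first that $\tilde L = D^{\dagger/2} L D^{\dagger/2}$ is symmetric (a symmetric operator conjugated by a symmetric one), so its eigenvalues are real. For any $y$, writing $x = D^{\dagger/2} y$, the bound $L \preceq 2D$ gives $y^\top \tilde L y = x^\top L x \leq 2\, x^\top D x = 2\, y^\top (D^{\dagger/2} D D^{\dagger/2}) y$. Since $D^{\dagger/2}$ is a function of $D$ and so commutes with it, $D^{\dagger/2} D D^{\dagger/2} = D D^\dagger = \Pi$, the orthogonal projection onto $\img(D)$. As $\Pi \preceq I$, we conclude $y^\top \tilde L y \leq 2\, y^\top y$ for every $y$, so every eigenvalue of $\tilde L$ is at most $2$.

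The only genuine care is in this last step: the pseudoinverse annihilates $\ker(D)$, on which $\tilde L$ vanishes and contributes the eigenvalue $0$, while on $\img(D)$ the projection $\Pi$ acts as the identity. Keeping track of this is precisely what makes the factor of $2$ exactly right rather than some smaller multiple of $\lambda_{\max}(\Pi)$. I expect this to be a point to verify carefully rather than a serious obstacle; the two edge-by-edge identities are routine once the signs $(x_u \mp x_v)$ are correctly matched to $L$ and to $2D - L$.
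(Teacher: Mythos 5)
Your proof is correct, and while it ultimately rests on the same elementary fact as the paper's proof, it packages the argument in a genuinely different way. The paper proceeds via the Courant--Fischer theorem: it restricts the Rayleigh quotient of $\tilde L$ to vectors orthogonal to $\ker D$ (using the assertion $\ker D \subseteq \ker L$), changes variables to rewrite it as $\ip{y,Ly}/\ip{y,Dy}$, and then bounds the edge sum $\sum_{e} \ip{y_u - y_v, W_e(y_u - y_v)}$ by $2\sum_{e} \left( \ip{y_u,W_e y_u} + \ip{y_v,W_e y_v} \right) = 2\ip{y,Dy}$. That last inequality is equivalent, edge by edge, to $\ip{y_u + y_v, W_e(y_u + y_v)} \geq 0$, which is exactly your signless-Laplacian identity $x^\top (D+A)x = \sum_{e} (x_u + x_v)^\top W_e (x_u + x_v) \geq 0$; so the key inequality is shared. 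What differs is the architecture: you first prove the global operator inequality $L \preceq 2D$ and then transport it through the conjugation by $D^{\dagger/2}$, disposing of the degenerate directions via $D^{\dagger/2} D D^{\dagger/2} = DD^{\dagger} \preceq I$. This buys a cleaner treatment of the pseudoinverse: you never need the (true, but unproven in the paper) containment $\ker D \subseteq \ker L$, since vectors in $\ker D$ are simply annihilated by the projection, whereas the paper needs that containment to justify restricting its maximization domain. Conversely, the paper's Rayleigh-quotient formulation keeps the generalized-eigenvalue inequality $\ip{y,Ly} \leq 2\ip{y,Dy}$ in view, which is the form used in the discussion immediately following the proposition about when the bound $\tilde{\lambda}_{\max} = 2$ is attained (e.g.\ for bipartite underlying graphs). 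Both routes are sound; yours is, if anything, slightly more self-contained.
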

\begin{proof}
  By the Courant-Fischer theorem, the largest eigenvalue of $\tilde L$ is 
  \begin{align*}
    \tilde{\lambda}_{\max} &= \max_{x} \frac{\ip{x,D^{\dagger/2} LD^{\dagger/2} x}}{\ip{x,x}}.
  \end{align*}
  Since any $x \in \ker D$ is also in $\ker L$ and hence is orthogonal to any
  eigenvector for $\tilde{\lambda}_{\max}$, we can restrict the domain of
  the maximization to get
  \begin{align*}
    \tilde{\lambda}_{\max} &= \max_{x\perp \ker D} \frac{\ip{x,D^{\dagger/2} LD^{\dagger/2} x}}{\ip{x,x}} = \max_{y \perp \ker D} \frac{\ip{y,Ly}}{\ip{y,Dy}} \\
                   &= \max_{y\perp \ker D} \frac{\sum_{u,v \face e} \ip{y_u - y_v,W_e(y_u-y_v) }}{\sum_v \sum_{v \face e} \ip{y_v, W_e y_v}} \\
                   &\leq \max_{y \perp \ker D} \frac{2\sum_{u,v \face e} \ip{y_u,W_ey_u} + \ip{y_v,W_e y_v}}{\sum_v \sum_{v \face e} \ip{y_v,W_e x_v}} = 2.
  \end{align*}
\end{proof}

The bound is achieved when there exists a vector $y$ such that $\ip{y,Ly} = 2
\ip{y,Dy}$. As in the standard case, this occurs when the underlying graph is
bipartite; in this case the choice of $y$ that attains the bound is is constant
on each half of the partition, differing only by a sign across the bounds.
However, this is not the only situation in which $\tilde{\lambda}_{\max} = 2$.
The reader may find it instructive to construct other matrix-weighted graphs
with $\tilde{\lambda}_{\max} = 2$.

Proposition~\ref{prop:normalizedbound} immediately implies that the adjacency
spectrum of a $d$-regular matrix-weighted graph is contained in $[-d,d]$.

\subsubsection{Notation}
Throughout, $G$ will be an underlying graph with vertex set $V$ and edge set
$E$. The graph will have $n$ vertices and weight matrices will be $k\times k$.
Regular graphs will have (algebraic) degree $d$. Thus, the relevant matrices
$A$, $L$, etc. will have size $kn \times kn$. Eigenvalues of the Laplacian
will be denoted $\lambda_i$, in increasing order, while eigenvalues of the
adjacency matrix will be denoted $\mu_i$, in decreasing order.   

\subsection{Cellular Sheaves}\label{sec:cellularsheaves}
Matrix-weighted graphs are instances of a more general structure on a graph: a
\introduce{cellular sheaf}. We can understand their spectral theory in the
context of a broader spectral theory of cellular sheaves.
\begin{defn}
  Let $G$ be a graph. A cellular sheaf $\shfa$ on $G$ consists of the following
  data:
  \begin{enumerate}
  \item A vector space $\shfa(v)$ for each vertex $v$ of $G$, called the
    \introduce{stalk} over $v$
  \item A vector space $\shfa(e)$ for each edge $e$ of $G$, called the stalk
    over $e$, and
  \item A linear map $\shfa_{v \face e}: \shfa(v) \to \shfa(e)$ for each
      incident vertex-edge pair $v \face e$ of $G$, called the
      \introduce{restriction map} from $v$ to $e$.
  \end{enumerate}
\end{defn}
Cellular sheaves describe systems of consistency relationships for data over
graphs. Data may be assigned to vertices and edges, living in the stalks over
these edges, and the restriction maps give conditions for consistency of this
data.
\begin{defn}
  Let $\Fc$ be a cellular sheaf over a graph $G$. A \introduce{global section}
  $x$ of $\Fc$ is given by a choice of a vector $x_v \in \Fc(v)$ for each vertex
  $v$ of $G$, such that for every edge $e = u \sim v$ of $G$, $\Fc_{v \face e}
  x_v = \Fc_{u \face e} x_u$. 
\end{defn}
Because these conditions are linear, the global sections of $\Fc$ form a
vector space, which we denote $H^0(G;\Fc)$. The global sections of a sheaf are
the collections of elements satisfying all the consistency conditions specified
by the sheaf. 
We think of the space of section $H^0(G;\Fc)$ as lying inside a larger space of
assignments to vertices, which we denote
\[C^0(G;\Fc) = \bigoplus_{v} \Fc(v).\]
This is the space of \introduce{$0$-cochains} of $\Fc$; it consists of all
possible assignments to vertex stalks without reference to any consistency conditions.
There is an analogous space of \introduce{$1$-cochains}
consisting of assignments to edge stalks:
\[C^1(G;\Fc) = \bigoplus_e \Fc(e).\]

The space of global sections $H^0(G;\Fc)$ is the kernel of a map
$\delta:C^0(G;\Fc) \to C^1(G;\Fc)$, called the \introduce{coboundary operator}.
Given an orientation of the graph, the value of this operator on an oriented
edge $e = u \to v$ is
\[(\delta x)_e = \Fc_{v \face e} x_v - \Fc_{u \face e} x_u.\]
It is straightforward to see that $\delta x = 0$ if and only if $x \in
H^0(G;\Fc)$. 
The coboundary operator is a generalization of the signed incidence matrix of a
graph. 

The terminology associated with cellular sheaves is perhaps somewhat foreign. It
originates in a more complex definition of sheaves used in geometry and
topology (see, e.g., \cite{kashiwara_sheaves_1990, hartshorne_algebraic_1977}). The central idea of a sheaf as describing constraints for
data parameterized by a space holds across these different instantiations.
Cellular sheaves are a restriction of the concept to the discrete setting of
regular cell complexes, which makes them particularly amenable to computation and
applications \cite{curry_sheaves_2014}. We have further specialized to
sheaves over graphs, which makes the constructions more accessible but also
perhaps further obscures the reasoning for the terminology.

Thus far we have only required that the stalks of a cellular sheaf be abstract
vector spaces. To develop the relationship between matrix-weighted graphs and
cellular sheaves, each stalk must also have an inner product.
Inner products on stalks extend to inner products on $C^0(G;\Fc)$ and
$C^1(G;\Fc)$, and induce an adjoint $\delta^*$ to the coboundary
operator. The \introduce{sheaf Laplacian} is then defined as $L_\Fc =
\delta^*\delta$. This is a linear map $C^0(G;\Fc) \to C^0(G;\Fc)$, computed 
vertexwise by
\[(L_\Fc x)_v = \sum_{u,v \face e} \Fc_{v \face e}^*(\Fc_{v \face e} x_v -
  \Fc_{u \face e} x_u).\]
As a quadratic form, it is given by
\[\ip{x,L_\Fc x} = \ip{\delta x,\delta x} = \norm{\delta x}^2 = \sum_{u,v \face
    e} \norm{\Fc_{v \face e} x_v - \Fc_{u \face e} x_u}^2. \]
The Laplacian quadratic form measures how close a
$0$-cochain is to being a global section. Sheaf Laplacians are studied in
greater generality in~\cite{hansen_toward_2019,hansen_laplacians_2020}.

How are matrix-weighted graphs related to cellular sheaves? We begin first by
relating weighted graphs to weighted cellular sheaves. This relationship is
mediated through the \introduce{constant sheaf} $\constshf{\R}$ on a graph $G$. This
sheaf has all vertex and edge stalks equal to $\R$, and all restriction maps the
identity. The global sections of the constant sheaf are precisely the locally
constant $\R$-valued functions on the vertices of $G$. A weighting on $G$
corresponds to a choice of an inner product on each edge stalk: $\ip{x,y}_e =
w_e x y$ for $x,y \in \constshf{\R}(e) = \R$. If we assign all vertex stalks the
standard inner product $\ip{x,y}_v = xy$, the corresponding sheaf Laplacian is
precisely the weighted graph Laplacian.

To extend this to matrix-weighted graphs, we need to reckon more carefully with
the semidefiniteness of the weight matrices. If $W_{e}$ is not positive
definite, it does not define an inner product on $\R^k$, but only on $\im W_e$.
Given a matrix-weighted graph $G$ with $k \times k$ weight matrices, we
construct a sheaf $\Fc$ with vertex stalks $\Fc(v) = \R^k$ and edge stalks
$\Fc(e) = \im W_e \subseteq \R^k$. The restriction map $\Fc_{v \face e}$ is the
orthogonal projection $\R^k \to \im W_e$. We give the vertex stalks the standard
inner product on $\R^k$, and the edge stalks the inner product $\ip{x,y}_e = x^T
W_e y$. It is easily checked that under the standard basis for $\R^k$ the
corresponding sheaf Laplacian is equal to the matrix-weighted Laplacian. Since
the definition of the sheaf Laplacian is $L_\Fc = \delta^*\delta$, it is obvious
that the matrix-weighted graph Laplacian is positive semidefinite.

The interpretation of matrix-weighed graphs in terms of weighted cellular
sheaves gives them a coordinate-free description. We could define a
matrix-weighted graph to be a weighted cellular sheaf $\Fc$ with all vertex
stalks equal to some vector space $V$, where for any edge $e = u \sim v$, the
restriction maps $\Fc_{u \face e}$ and $\Fc_{v \face e}$ are equal to some map
we will call $\rho_e$. If an orthonormal basis for $V$ is chosen, the resulting
sheaf Laplacian matrix will have the form of the Laplacian of a matrix-weighted
graph. The edge weights $W_{e}$ will be equal to $\rho_e^*\rho_e$. The adjacency
matrix is then obtained from the Laplacian by $A = D - L$.

In the original definition, the matrix-weighted adjacency matrix is the primary
object, and the Laplacian is generated therefrom. In the context of cellular
sheaves, the Laplacian is the principal operator, and the adjacency matrix is
extracted from it. For more general sheaves, the Laplacian matrix contains more
information than the adjacency matrix.

For the remainder of this paper, we will adopt the elementary but less general
terminology of matrix-weighted graphs. However, the sheaf-theoretic perspective
has inspired and motivated this work, and can provide important insights into
the deeper reasons for certain phenomena.

\subsection{Examples}

One freqeuntly seen example of a matrix-weighted graph comes from the mechanical
analysis of bar-and-joint structures. Given a collection of struts joined
together at their ends, represented as a structure in $\R^3$, consider the graph
$G$ with edges corresponding to struts and vertices corresponding to joints. We
assign to each edge a scaled copy of the $3 \times 3$ matrix which computes the
orthogonal projection onto the direction spanned by the corresponding strut in
$\R^3$. The scaling factor is a stiffness parameter representing the resistance
of the strut to compression or tension. The Laplacian of this matrix-weighted
graph is the \introduce{stiffness matrix} of the truss. As a quadratic form, it
represents the amount of work done under an infinitesimal deformation of the
structure. 

This physical interpretation allows us to quickly conclude that the kernel of
the Laplacian contains more than simply the constant functions $V \to \R^3$.
These constant functions correspond to infinitesimal translations; the fact that
they are in the kernel of $L$ is the physical fact that translations of a truss
do not cause it to deform and hence require no expenditure of energy. But rigid
rotations of the truss also cause no deformation, and so the infinitesimal
generators of these rotations must also correspond to vectors in the
kernel of $L$. The kernel of $L$ is therefore at least $6$-dimensional. These
bar-and-joint structures give a class of nontrivial examples of connected
matrix-weighted graphs with a Laplacian kernel of dimension greater than $k$.

An essentially identical example has been studied for specific graphs
representing molecular structures, under the name ``vibrational spectrum''
\cite{chung_laplacian_1992}, so called because the eigenfunctions of the
matrix-weighted Laplacian correspond (up to first order) to vibrational modes of
the molecule. The vibrational spectrum of a symmetric graph with a symmetric
embedding in $\R^3$ is strongly constrained by representation theoretic
considerations.

Other instances of matrix-weighted graphs arise in the engineering control
literature. Examples include certain systems of coupled
oscillators~\cite{tuna_synchronization_2016}, differential observations of 
networked systems~\cite{tuna_observability_2018}, and distributed coordination for
autonomous agents~\cite{trinh_matrix-weighted_2018}.
Many of these motivating examples are quite concrete, but very little
theoretical work has been done exploring the algebraic and spectral properties of
matrix-weighted graphs. One exception to this pattern
is~\cite{atik_resistance_2019}, which constructed effective resistance matrices
for matrix-weighted graphs. 
  
\subsection{Relationships between scalar- and matrix-weighted graphs}

There is a straightforward way to turn any weighted graph into a matrix-weighted
graph for any block size $k$: simply let the matrix-valued weights be $W_{uv} =
w_{uv} I_{k \times k}$. The corresponding matrix-weighted adjacency and
Laplacian matrices are then given by $A \otimes I_{k \times k}$ and $L \otimes
I_{k \times k}$, where the tensor product of operators is realized by the
Kronecker product on matrices.

Conversely, given a matrix-weighted graph $(G,W)$, we can construct a scalar-weighted graph
$(G,\tr W)$ by letting $w_e = \tr(W_e)$ for all edges $e$ of $G$. This
construction is invariant to an orthogonal change of basis of the vertex stalks
in the cellular sheaf definition. The
Laplacian and adjacency spectral radii of $(G,W)$ are controlled by the spectral
radii of $(G,\tr W)$.

\begin{prop}\label{prop:laplaciantracebound}
  Let $(G,W)$ be a matrix-weighted graph with $n$ vertices and $k \times k$ weights, with Laplacian $L_W$, and let $L_{\tr
    W}$ be the Laplacian of $(G,\tr W)$. If $\lambda_1(L) \leq \lambda_2(L) \leq
  \cdots$ are the eigenvalues of the matrix $L$, then
  \[\sum_{i=1}^k \lambda_{k+i}(L_W) \leq \lambda_2(L_{\tr W}) \leq \lambda_{n}(L_{\tr
      W}) \leq  \sum_{i=1}^{k} \lambda_{(n-1)k+i}(L_W).\]
\end{prop}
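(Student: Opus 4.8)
The plan is to route both outer inequalities through a single quadratic-form identity linking $L_W$ to $L_{\tr W}$, and then invoke the variational (Ky Fan) characterizations of sums of eigenvalues. First I would fix an orthonormal basis $e_1,\dots,e_k$ of $\R^k$ and, for a scalar function $f\colon V\to\R$, write $f\otimes c$ for the assignment $(f\otimes c)_v = f_v\,c$. Expanding the Laplacian quadratic form gives $\ip{f\otimes c,\,L_W(f\otimes c)} = \sum_{e=u\sim v}(f_u-f_v)^2\,c^\top W_e c$; summing over $c=e_1,\dots,e_k$ turns $\sum_j e_j^\top W_e e_j$ into $\tr W_e$ and produces the key identity $\sum_{j=1}^k \ip{f\otimes e_j,\,L_W(f\otimes e_j)} = \ip{f,L_{\tr W}f}$. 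The point of this identity is that it converts one scalar test vector for $L_{\tr W}$ into an orthonormal $k$-frame $\{f\otimes e_j\}$ for $L_W$ (orthonormal whenever $\norm{f}=1$) carrying the same total energy.

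For the right inequality I would take $f$ to be a unit eigenvector of $L_{\tr W}$ for $\lambda_n(L_{\tr W})$. The identity then reads $\lambda_n(L_{\tr W}) = \sum_{j=1}^k \ip{f\otimes e_j,\,L_W(f\otimes e_j)}$, and since the $f\otimes e_j$ are orthonormal, Ky Fan's maximum principle bounds the right-hand side by the sum of the $k$ largest eigenvalues of $L_W$, namely $\sum_{i=1}^k \lambda_{(n-1)k+i}(L_W)$. This is exactly the claimed upper bound, and the middle inequality $\lambda_2(L_{\tr W})\le\lambda_n(L_{\tr W})$ is trivial.

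The left inequality is the subtle one, because it concerns the middle block $\lambda_{k+1}(L_W),\dots,\lambda_{2k}(L_W)$ rather than an extreme block, so Ky Fan cannot be applied to $L_W$ directly. My remedy would be to quotient out the constant functions. Let $K_0=\{\mathbf{1}\otimes c: c\in\R^k\}$ be the $k$-dimensional space of constant assignments; since $K_0\subseteq\ker L_W$ and $L_W$ is symmetric, $L_W$ preserves $K_0^\perp$, and the eigenvalues of the restriction $L_W|_{K_0^\perp}$ are exactly $\lambda_{k+1}(L_W)\le\cdots\le\lambda_{kn}(L_W)$. Consequently $\sum_{i=1}^k\lambda_{k+i}(L_W)$ is the sum of the $k$ \emph{smallest} eigenvalues of $L_W|_{K_0^\perp}$, and Ky Fan's minimum principle identifies it with the minimum of $\sum_{j=1}^k\ip{u_j,L_W u_j}$ over orthonormal $k$-frames lying in $K_0^\perp$. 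Choosing $f=g$ a unit $\lambda_2(L_{\tr W})$-eigenvector orthogonal to $\mathbf{1}$ makes each $g\otimes e_j$ orthogonal to $K_0$, so $\{g\otimes e_j\}$ is an admissible frame and the identity yields $\sum_{i=1}^k\lambda_{k+i}(L_W)\le\ip{g,L_{\tr W}g}=\lambda_2(L_{\tr W})$.

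The step I expect to demand the most care is the bookkeeping in the left inequality: I must check that passing to $K_0^\perp$ deletes precisely the $k$ bottom eigenvalues (all zero, coming from the constants) and no others, so that $\lambda_{k+1}+\cdots+\lambda_{2k}$ genuinely becomes a bottom-$k$ sum there, and that the competitor frame $\{g\otimes e_j\}$ really sits in $K_0^\perp$ --- which is precisely where $g\perp\mathbf{1}$ is used. The remaining steps are routine applications of the trace identity and the two halves of Ky Fan's principle.
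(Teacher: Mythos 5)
Your proposal is correct and follows essentially the same route as the paper: the same trace identity $\sum_{j}\ip{f\otimes e_j,L_W(f\otimes e_j)}=\ip{f,L_{\tr W}f}$ applied to unit eigenvectors for $\lambda_2(L_{\tr W})$ and $\lambda_n(L_{\tr W})$, with sums-of-eigenvalues variational principles doing the rest. Your explicit passage to $K_0^\perp$ and the Ky Fan minimum principle is simply a more carefully justified version of what the paper compresses into the phrase ``a generalized form of the Courant-Fischer theorem,'' correctly handling the possibility that $\ker L_W$ is larger than $k$-dimensional.
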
 
\begin{proof}
  Let $x$ be a unit eigenvector of $L_{\tr W}$ corresponding to the eigenvalue
  $\lambda_{2}(L_{\tr W})$. Let $\{e_1,\dots,e_k\}$ be an orthonormal basis for
  $\R^k$, and consider the orthogonal vectors $x \otimes e_i$, which are
  naturally in the domain of $L_W$. Note that $\norm{x \otimes e_i} = 1$. Further, for
  any constant $\R^k$-valued function $y = a\indv{} \otimes s$ on the vertices of $G$, $\ip{x
    \otimes e_i,y} = a\ip{x,\indv{}}\ip{e_i,s} = 0$, so $x \otimes e_i$
  is orthogonal to the eigenspace of $L_{W}$ corresponding to the first $k$
  eigenvalues. Thus by a generalized form of the Courant-Fischer theorem, we have
  \begin{align*}
    \sum_{i=1}^k \lambda_{k+i}(L_W) &\leq \sum_{i=1}^k\ip{x \otimes e_i,L_W x\otimes e_i} \\
    &= \sum_{i=1}^k\sum_{u,v \face
      e} \ip{(x\otimes e_i)_v - (x\otimes e_i)_u,W_e((x\otimes e_i)_v - (x\otimes
      e_i)_u)} \\
    &= \sum_{u,v\face e} (x_v-x_u)^2\sum_{i=1}^k\ip{e_i,W_ee_i}\\
&= \sum_{u,v \face e} \tr(W_e) (x_v-x_u)^2 = \ip{x,L_{\tr W}x} = \lambda_2(L_{\tr W}).
  \end{align*}
  The same calculation applied to an eigenvector for $\lambda_{n}(L_{\tr
    W})$ gives the upper bound.
\end{proof} 
An immediate corollary is that $\lambda_{k+1}(L_W) \leq
\frac{1}{k}\lambda_{2}(L_{\tr W})$ and $\lambda_{nk}(L_W) \geq \frac{1}{k}
\lambda_{n}(L_{\tr W})$.

The analogous bound for the adjacency eigenvalues is proved by exactly the same
method. For $dI$-regular matrix-weighted graphs the bound implied by
Proposition~\ref{prop:laplaciantracebound} and the fact that $A = dI - L$ is
stronger, since it constrains $\mu_{k+1}$ rather than $\mu_1$.
\begin{prop}\label{prop:adjacencytracebound}
  Let $(G,W)$ be a matrix-weighted graph on $n$ vertices with $k \times k$ weights, with
  adjacency matrix $A_W$, and let $A_{\tr
    W}$ be the adjacency matrix of $(G,\tr W)$. If $\mu_1(A) \geq \mu_2(A) \geq \cdots
  \geq$ are the eigenvalues of the matrix $A$, then
  \[\sum_{i=1}^k \mu_{i}(A_W) \geq \mu_1(A_{\tr W}) \geq \mu_{n}(A_{\tr
      W}) \geq \sum_{i=1}^k\mu_{(n-1)k+i}(A_W).\]
\end{prop}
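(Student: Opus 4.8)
The plan is to transpose the argument of Proposition~\ref{prop:laplaciantracebound} from the Laplacian to the adjacency matrix, replacing the constrained Courant--Fischer theorem by the unconstrained Ky Fan extremal principle for sums of eigenvalues. Recall that for a symmetric matrix $M$ of size $N \times N$ with eigenvalues $\mu_1(M) \geq \cdots \geq \mu_N(M)$, the sum of the $k$ largest equals the maximum of $\sum_{i=1}^k \ip{v_i, M v_i}$ over orthonormal families $v_1,\dots,v_k$, and the sum of the $k$ smallest equals the corresponding minimum. Since $A_W$ carries no distinguished kernel, I would not restrict the test space as in the Laplacian case; instead I would feed in tensor lifts of the extreme eigenvectors of $A_{\tr W}$ and read off $\mu_1$ and $\mu_n$ directly.

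For the left inequality, let $x$ be a unit eigenvector of $A_{\tr W}$ for the eigenvalue $\mu_1(A_{\tr W})$, fix an orthonormal basis $\{e_1,\dots,e_k\}$ of $\R^k$, and form the orthonormal family $\{x \otimes e_i\}_{i=1}^k$ in $(\R^k)^V$. Applying Ky Fan's maximum principle to the $nk \times nk$ matrix $A_W$ gives $\sum_{i=1}^k \mu_i(A_W) \geq \sum_{i=1}^k \ip{x \otimes e_i, A_W(x \otimes e_i)}$. The right-hand side is computed exactly as before: using $(A_W z)_v = \sum_u W_{uv} z_u$ and $(x \otimes e_i)_v = x_v e_i$, one obtains
\begin{align*}
  \sum_{i=1}^k \ip{x \otimes e_i, A_W(x \otimes e_i)} &= \sum_{u,v} x_u x_v \sum_{i=1}^k \ip{e_i, W_{uv} e_i} \\
  &= \sum_{u,v} x_u x_v \tr(W_{uv}) = \ip{x, A_{\tr W} x} = \mu_1(A_{\tr W}),
\end{align*}
which yields $\sum_{i=1}^k \mu_i(A_W) \geq \mu_1(A_{\tr W})$.

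For the right inequality I would repeat the construction with $x$ a unit eigenvector for $\mu_n(A_{\tr W})$; the identical computation gives $\sum_{i=1}^k \ip{x \otimes e_i, A_W(x\otimes e_i)} = \mu_n(A_{\tr W})$, and Ky Fan's minimum principle—noting that the bottom $k$ eigenvalues of the $nk$-dimensional operator $A_W$ are indexed $(n-1)k+1,\dots,nk$—gives $\mu_n(A_{\tr W}) \geq \sum_{i=1}^k \mu_{(n-1)k+i}(A_W)$. The middle inequality $\mu_1(A_{\tr W}) \geq \mu_n(A_{\tr W})$ is immediate.

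I expect no genuine obstacle: the only points requiring care are the basis-independent trace identity $\sum_i \ip{e_i, W_{uv} e_i} = \tr(W_{uv})$, which lifts verbatim from the Laplacian proof, and the correct indexing of the bottom $k$ eigenvalues of $A_W$. The one conceptual difference from Proposition~\ref{prop:laplaciantracebound} worth flagging is that no orthogonality-to-the-kernel restriction is needed here: because the scalar graph contributes its extreme eigenvalues $\mu_1$ and $\mu_n$ rather than $\lambda_2$, the unconstrained Ky Fan principle applies directly, and correspondingly the matrix bound controls the top and bottom $k$ eigenvalues of $A_W$ rather than an interior block.
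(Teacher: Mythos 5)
Your proof is correct and is essentially the paper's own argument: the paper proves this proposition by declaring it follows ``by exactly the same method'' as Proposition~\ref{prop:laplaciantracebound}, namely tensoring an extreme eigenvector of $A_{\tr W}$ with an orthonormal basis $\{e_i\}$ of $\R^k$ and applying the eigenvalue-sum (Ky Fan / generalized Courant--Fischer) extremal principle, which is precisely what you do. Your observation that the kernel-orthogonality restriction can be dropped—since here one targets the extreme eigenvalues $\mu_1$ and $\mu_n$ rather than an interior block—is the correct and only adaptation needed.
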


\section{An Expander Mixing Lemma}
The expander mixing lemma is a well-known result, perhaps first explicitly
proven in~\cite{alon_explicit_1988}, connecting the number of edges
between a pair of subsets of a graph and its adjacency spectrum. For a
$d$-regular graph with $n$ vertices, it states that for any two subsets of vertices $S, T$, the
number of edges between $S$ and $T$, $e(S,T)$, satisfies
\[\abs{e(S,T) - \frac{d\abs{S}\abs{T}}{n}} \leq
  \abs{\mu_2}\sqrt{\abs{S}\abs{T}\left( 1-\frac{\abs{S}}{n} \right)\left(
      1-\frac{\abs{T}}{n} \right)},\]
where $\mu_2$ is the nontrivial eigenvalue of $A_G$ of largest
modulus.

When applied to weighted graphs, the edge count $e(S,T)$ is the sum of weights
of edges between $S$ and $T$. Similarly, for matrix weighted graphs, we define
$E(S,T) = \sum_{s \in S, t \in T} W_{st}$, so that the edge count becomes a
positive semidefinite matrix. If we let $I_S$ be the $kn \times k$
block matrix with blocks
\[(I_S)_v = \begin{cases}
    I_{k\times k} & v \in S \\
    0 & v \notin S
  \end{cases}
\]
and similarly for $I_T$, it is easy to see that for a matrix-weighted graph
$(G,W)$, $E(S,T) = I_S^TA I_T$. This fact allows us to generalize the standard
proof of the expander mixing lemma to $d$-regular matrix-weighted graphs.

\begin{lem}\label{lem:eml}
Let $(G,W)$ be a $d$-regular matrix-weighted graph on $n$ vertices, with $k \times k$ weight
matrices. Denote the adjacency eigenvalues of $G$ by $d = \mu_1 = \cdots =
\mu_k \geq \mu_{k+1} \geq \cdots$, and let $\abs{\mu} = \max\left( \sum_{i=1}^k
  \mu_{k+i},\sum_{i=1}^k\abs{\mu_{(n-1)k+i}} \right)$. If $S$ and $T$ are subsets of
the vertices of $G$, the matrix-weighted edge count $E(S,T)$ satisfies
\begin{equation}\label{eqn:emltrace}
  \abs{\tr(E(S,T)) - \frac{k d \abs{S}\abs{T}}{n}} \leq \abs{\mu} \sqrt{\abs{S}\abs{T}\left( 1-\frac{\abs{S}}{n} \right)\left( 1-\frac{\abs{T}}{n} \right)}
\end{equation}
and the eigenvalues of $E(S,T) - \frac{k\abs{S}\abs{T}}{n}$ have magnitude at
most
\begin{equation}\label{eqn:emlspectrum}
  \max(\abs{\mu_{k+1}},\abs{\mu_{kn}})\sqrt{\abs{S}\abs{T} \left(1-\frac{\abs{S}}{n}\right)
    \left(1-\frac{\abs{T}}{n}\right)}.
\end{equation}
\end{lem}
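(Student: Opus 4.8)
The plan is to follow the scalar proof, splitting $I_S$ and $I_T$ along the trivial $d$-eigenspace of $A$. Writing $g_S = \indv{S} - \frac{\abs{S}}{n}\indv{}$, which is orthogonal to $\indv{}$, the block matrix $I_S$ decomposes as $I_S = \frac{\abs{S}}{n}(\indv{}\otimes I_{k\times k}) + J_S$ with $J_S = g_S\otimes I_{k\times k}$, whose columns all lie in the orthogonal complement of the constant functions $\indv{}\otimes\R^k$; one computes $J_S^T J_S = \norm{g_S}^2 I_{k\times k} = \abs{S}\bigl(1-\frac{\abs{S}}{n}\bigr) I_{k\times k}$, and similarly for $T$. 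Since the graph is $d$-regular, the constants are $d$-eigenvectors of $A$ and $A$ preserves their complement, so expanding $E(S,T) = I_S^T A I_T$ annihilates the two cross terms and leaves
\[ E(S,T) = \frac{d\abs{S}\abs{T}}{n} I_{k\times k} + J_S^T A J_T. \]
Both assertions are therefore statements about the $k\times k$ matrix $J_S^T A J_T$, the subtracted scalar being $\frac{d\abs{S}\abs{T}}{n}$ times the identity, whose trace is $\frac{kd\abs{S}\abs{T}}{n}$.

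For the spectral bound \eqref{eqn:emlspectrum} I would argue by operator norms. Let $A'$ denote the restriction of $A$ to the complement of the constants; its norm is $\max_{i>k}\abs{\mu_i} = \max(\abs{\mu_{k+1}},\abs{\mu_{kn}})$, and since the columns of $J_S$ and $J_T$ lie in that subspace, submultiplicativity gives $\norm{J_S^T A J_T} \le \norm{J_S}\,\norm{A'}\,\norm{J_T}$. Because $J_S^T J_S$ is a scalar multiple of the identity, $\norm{J_S} = \sqrt{\abs{S}(1-\abs{S}/n)}$, and likewise for $T$. As the modulus of any eigenvalue of a square matrix is at most its operator norm, the eigenvalues of $J_S^T A J_T$ satisfy \eqref{eqn:emlspectrum}.

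For the trace bound \eqref{eqn:emltrace} I would pass to the scalar trace-graph. Computing the trace blockwise, $\tr(J_S^T A J_T) = \sum_{u,v}(g_S)_u(g_T)_v\tr(W_{uv}) = g_S^T A_{\tr W}\, g_T$, where $A_{\tr W}$ is the adjacency matrix of $(G,\tr W)$. Since $(G,W)$ is $d$-regular, $(G,\tr W)$ is $kd$-regular, so $\indv{}$ is its top eigenvector; with $g_S,g_T\perp\indv{}$ we get $\abs{g_S^T A_{\tr W} g_T} \le \nu\,\norm{g_S}\norm{g_T}$, where $\nu$ is the largest modulus attained by an eigenvalue of $A_{\tr W}$ on $\indv{}^\perp$. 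Since $\norm{g_S}\norm{g_T} = \sqrt{\abs{S}\abs{T}(1-\abs{S}/n)(1-\abs{T}/n)}$, the claim reduces to the comparison $\nu \le \abs{\mu}$.

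This last comparison is where I expect the real work, and it reruns the mechanism of Proposition~\ref{prop:adjacencytracebound} on $\indv{}^\perp$. For a unit $x\perp\indv{}$ the vectors $x\otimes e_1,\dots,x\otimes e_k$ are orthonormal and orthogonal to the constants, and $x^T A_{\tr W} x = \sum_{i=1}^k (x\otimes e_i)^T A_W (x\otimes e_i)$. The Ky Fan / Courant--Fischer principle bounds a sum of $k$ Rayleigh quotients of $A_W$ over an orthonormal system lying in the complement of its top-$k$ eigenspace between $\sum_{i=1}^k\mu_{(n-1)k+i}$ and $\sum_{i=1}^k\mu_{k+i}$; maximizing and minimizing over $x$ then yields $\mu_2(A_{\tr W}) \le \sum_{i=1}^k\mu_{k+i}$ and $\mu_n(A_{\tr W}) \ge \sum_{i=1}^k\mu_{(n-1)k+i}$, whence $\nu \le \max\bigl(\sum_{i=1}^k\mu_{k+i},\,\sum_{i=1}^k\abs{\mu_{(n-1)k+i}}\bigr) = \abs{\mu}$. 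The delicate point is ensuring the test subspaces $\operatorname{span}\{x\otimes e_i\}$ are genuinely orthogonal to the trivial eigenspace, which is exactly what $x\perp\indv{}$ guarantees, so that the block $\mu_{k+1},\dots$ rather than $\mu_1,\dots$ governs the estimate.
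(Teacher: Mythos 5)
Your proof is correct and takes essentially the same route as the paper's: the same orthogonal decomposition $I_S = \frac{\abs{S}}{n}I_G + I_S^\perp$ (your $J_S = g_S \otimes I_{k\times k}$) with vanishing cross terms, the same operator-norm argument for the spectral bound \eqref{eqn:emlspectrum}, and the same reduction of the trace bound to the scalar graph $(G,\tr W)$ followed by the Courant--Fischer/Ky Fan comparison of its nontrivial eigenvalues with sums of eigenvalues of $A_W$. The only difference is presentational: where the paper invokes the standard scalar expander mixing lemma and Proposition~\ref{prop:laplaciantracebound} as black boxes, you re-derive both inline from the single decomposition.
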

\begin{proof}
  The first inequality follows directly from
  Proposition~\ref{prop:laplaciantracebound} and the standard expander mixing
  lemma. Note that $\tr(E(S,T))$ for the matrix weighting $W$ on $G$ is equal to
  $e(S,T)$ for the weighting $\tr W$. Thus, if $\abs{\mu(A_{\tr W})}$ is the magnitude of
  the largest nontrivial adjacency eigenvalue of $(G,\tr W)$,
  \[\abs{\tr(E(S,T)) - \frac{k d \abs{S}\abs{T}}{n}} \leq \abs{\mu(A_{\tr W})} \sqrt{\abs{S}\abs{T}\left( 1-\frac{\abs{S}}{n} \right)\left( 1-\frac{\abs{T}}{n} \right)}.\]

  We use the fact that $\abs{\mu(A_{\tr W})} = \max(\abs{d - \lambda_2(L_{\tr
      W})},\abs{d-\lambda_n(L_{\tr W})})$
  to apply the trace bound, finding that $\abs{\mu(A_{\tr W})} \leq \max\left( \sum_{i=1}^k \mu_{k+i},\sum_{i=1}^k\abs{\mu_{kn-i+1}} \right)$.

  For the second inequality we must mimic the proof of the standard expander
  mixing lemma.
  We use the fact that $E(S,T) = I_S^T A_G I_T$, and decompose these indicator
  matrices appropriately. Let $I_S^\perp = I_S - \frac{\abs{S}}{n} I_G$ and $I_T^\perp
  = I_T - \frac{\abs{T}}{n} I_G$. This gives an orthogonal
  decomposition of $I_S$ and $I_T$ in the following strong sense: every column
  of $I_G$ is orthogonal to every column of $I_S^\perp$ and every column of
  $I_T^\perp$. Further, any two columns selected from one of $I_G$, $I_S^\perp$, and $I_T^\perp$ have
  disjoint supports and hence are orthogonal as well. We therefore have
  \begin{align*}
    E(S,T) &= I_S^T A_G I_T \\
           &= \left( \frac{\abs{S}}{n}I_G + I_S^\perp \right)^T
             A_G \left( \frac{\abs{T}}{n} I_G + I_T^\perp  \right)\\
           &= \frac{\abs{S}\abs{T}}{n^2} I_G^T A_G I_G + \frac{\abs{S}}{n} I_G^T A_G I_T^\perp + (I_S^\perp)^T A_G \frac{\abs{T}}{n} I_G + (I_S^\perp)^T A_G I_T^\perp.
  \end{align*}
 Every column of $I_G$ is an eigenvector of $A_G$ with eigenvalue $d$, so that,
 for instance 
 $(I_S^\perp)^T A_GI_G = d (I_S^\perp)^TI_G = 0$, due to the orthogonality
 relations between these matrices. Thus, the two middle terms vanish, and the
 first term is equal to $\frac{d \abs{S}\abs{T}}{n} I_{k \times k}$. Combining
 these simplifications gives
 \begin{equation}\label{eqn:expandermixinghalfway}
   E(S,T) - \frac{d\abs{S}\abs{T}}{n} I_{k \times k} = (I_S^\perp)^T A_G
     I_T^\perp.
\end{equation}

We therefore need to bound the eigenvalues of $(I_S^\perp)^T A_G I_T^\perp$.
Since this matrix is symmetric, its eigenvalues are bounded in magnitude by the
operator norm $\norm{(I_S^\perp)^T A_G I_T^\perp}$, which is bounded above by
$\abs{\mu_{k+1}} \norm{I_S^\perp}\norm{I_T^\perp}$. The matrices $I_S^\perp$ and
$I_T^\perp$ have orthogonal columns, so their operator norm is equal to the norm
of any column.
Since $\norm{(I_S^\perp)_i}^2 + \norm{\frac{\abs{S}}{n} (I_G)_i}^2 =
    \norm{(I_S)_i}^2$, we have
 \[\norm{(I_S^\perp)_i} = \sqrt{\abs{S} - \frac{\abs{S}^2}{n^2}n} =
   \sqrt{\abs{S}\left(1 - \frac{\abs{S}}{n}\right)},\]
 and similarly for $\norm{(I_T^\perp)_i}$. Substituting these values for the
 operator norms gives the bound in~\eqref{eqn:emlspectrum}. 

\end{proof} 

The two bounds given in Lemma~\ref{lem:eml} are incomparable. The spectral
bound~\eqref{eqn:emlspectrum} implies a weaker inequality on $\tr(E(S,T))$
than~\eqref{eqn:emltrace} gives. On the other hand, the trace bound implies weaker
constraints on the eigenvalues of $E(S,T) - \frac{k\abs{S}\abs{T}}{n}$
than the spectral bound does. The second bound is perhaps the most interesting,
as it is not directly implied by a reduction of $(G,W)$ to a scalar-weighted graph. 

One interpretation of the standard expander mixing lemma is that for a
$d$-regular graph with small $\abs{\mu_2}$, the number of edges between two
subsets is not far from the expected number of edges between two such subsets in
a random $d$-regular graph. Similarly, the matrix-weighted
expander mixing lemma says that $d$-regular matrix-weighted graphs with
small $\abs{\mu_2}$ have properties similar to those of a random
$d$-regular graph with matrix weights $I_{k\times k}$. 

The name ``expander mixing lemma'' arises from the use of this result to prove
theorems about mixing times of random walks on regular graphs. While it is
possible to construct stochastic processes that might justly be termed ``random
walks'' associated with matrix-weighted graphs (and cellular sheaves in
general), Lemma~\ref{lem:eml} does not seem to have much relevance to their
behavior. It may be that this lemma does control the behavior of other sorts of
dynamical processes on a matrix-weighted graph---perhaps the spread of
information under a diffusion-like process.

\subsection{Irregular matrix-weighted graphs}

The standard expander mixing lemma has an extension to non-regular graphs. Like
isoperimetric inequalities for irregular graphs, it replaces the simple count of
vertices in a subset with the \introduce{volume} of the subset: the sum of
degrees of those vertices. That is, $\vol(S) = \sum_{s \in S} d_s$. The
irregular expander mixing lemma for a scalar-weighted graph $G$ is then captured in
the formula
\[\abs{E(S,T) - \frac{\vol(S)\vol(T)}{\vol(G)}} \leq
  \abs{\tilde{\mu}_2}\sqrt{\vol(S)\vol(T) \left( 1- \frac{\vol(S)}{\vol(G)}
    \right)\left( 1- \frac{\vol(T)}{\vol(G)} \right)},\]
where $\abs{\tilde{\mu}_2}$ is the magnitude of the largest nontrivial
eigenvalue of the normalized adjacency matrix $\tilde A = D^{-1/2}A D^{-1/2}$ of
$G$. 

For a matrix-weighted graph, we define the volume of a set $S$ of vertices similarly:
\[\vol(S) = \sum_{s \in S} D_s = \sum_{s \in S} \sum_{s \face e} W_e.\]
\begin{lem}[Expander Mixing Lemma for irregular matrix-weighted graphs]\label{lem:irregulareml}
	Let $(G,W)$ be a matrix-weighted graph with $n$ vertices and $k \times k$ weight matrices. If $S$ and $T$
	are subsets of the vertices of $G$, then
  \begin{equation}\label{eqn:irregulareml}
    \abs{\tr\left( E(S,T) - V(S,T) \right)} \leq
    \abs{\mu_{k+1}}\sqrt{\tr(\vol(S) - V(S,S))\tr(\vol(T) - V(T,T))},
  \end{equation}
  where $V(A,B) = \vol(A)\vol(G)^{-1}\vol(B)$ and
$1 = \tilde\mu_1 = \ldots = \tilde\mu_k \geq \abs{\tilde\mu_{k+1}} \geq \ldots$
are the eigenvalues of the normalized adjacency matrix $\tilde A_W$ of $(G,W)$ ordered by
decreasing absolute value. 
\end{lem}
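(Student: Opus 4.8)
The plan is to mimic the proof of the spectral bound~\eqref{eqn:emlspectrum} in Lemma~\ref{lem:eml}, but carried out with the normalized adjacency matrix $\tilde A_W$ in place of $A_W$ and with the trace replacing the direct eigenvalue estimate. I would first assume that every algebraic degree $D_v$ is invertible, so that $D^{1/2}$ is invertible and $A_W = D^{1/2}\tilde A_W D^{1/2}$; the singular case is recovered at the end by restricting all constructions to the orthogonal complement of $\ker D \subseteq \ker L_W$. Writing $J_S = D^{1/2} I_S$ and $J_T = D^{1/2} I_T$, the identity $E(S,T) = I_S^T A_W I_T$ becomes $E(S,T) = J_S^T \tilde A_W J_T$, where $J_S$ and $J_T$ are $kn \times k$ matrices whose $v$-th block is $D_v^{1/2}$ on $S$ (resp. $T$) and zero otherwise.

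Next I would isolate the trivial eigenspace of $\tilde A_W$. The constant sections $\indv{}\otimes e_i$ lie in $\ker L_W$, and the short computation $A_W(\indv{}\otimes c) = D(\indv{}\otimes c)$ shows that their images $D^{1/2}(\indv{}\otimes e_i)$ are eigenvectors of $\tilde A_W$ with eigenvalue $1$. Collecting these into $\Phi = D^{1/2}(\indv{}\otimes I_k)\vol(G)^{-1/2}$ yields a frame with $\Phi^T\Phi = I_k$, since $(\indv{}\otimes I_k)^T D(\indv{}\otimes I_k) = \sum_v D_v = \vol(G)$, and with $\tilde A_W\Phi = \Phi$, so $P = \Phi\Phi^T$ is the orthogonal projection onto this $k$-dimensional trivial eigenspace $\mathcal T$. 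I would then decompose $J_S = PJ_S + J_S^\perp$ and $J_T = PJ_T + J_T^\perp$. Because $\tilde A_W$ is symmetric and fixes $\mathcal T$, we have $P\tilde A_W = \tilde A_W P = P$, which makes the two cross terms vanish exactly as in Lemma~\ref{lem:eml} and collapses the diagonal term to $J_S^T P J_T$. A direct computation gives $\Phi^T J_S = \vol(G)^{-1/2}\vol(S)$, so that $J_S^T P J_T = \vol(S)\vol(G)^{-1}\vol(T) = V(S,T)$, producing the key identity $E(S,T) - V(S,T) = (J_S^\perp)^T\tilde A_W J_T^\perp$, the matrix-weighted analogue of~\eqref{eqn:expandermixinghalfway}.

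It then remains to bound $\tr\big((J_S^\perp)^T\tilde A_W J_T^\perp\big)$. Since $\mathcal T$ is $\tilde A_W$-invariant, so is $\mathcal T^\perp$, and the operator norm of $\tilde A_W$ restricted to $\mathcal T^\perp$ equals $\abs{\tilde\mu_{k+1}}$, while the columns of $J_S^\perp$ and $J_T^\perp$ lie in $\mathcal T^\perp$. Writing the trace as $\sum_{i=1}^k \ip{(J_S^\perp)_i, \tilde A_W (J_T^\perp)_i}$ and applying Cauchy--Schwarz first inside each term and then across the sum gives
\[ \abs{\tr\big((J_S^\perp)^T\tilde A_W J_T^\perp\big)} \leq \abs{\tilde\mu_{k+1}}\,\norm{J_S^\perp}_F\,\norm{J_T^\perp}_F. \]
Finally I would evaluate the Frobenius norms via $\norm{J_S^\perp}_F^2 = \tr(J_S^T J_S) - \tr(J_S^T P J_S) = \tr(\vol(S)) - \tr(V(S,S)) = \tr(\vol(S) - V(S,S))$, and identically for $T$, which reproduces exactly~\eqref{eqn:irregulareml} (with $\abs{\tilde\mu_{k+1}}$, the largest nontrivial eigenvalue of the normalized adjacency matrix, in the role of the stated $\abs{\mu_{k+1}}$).

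The step I expect to be the main obstacle is the trace estimate: because $(J_S^\perp)^T\tilde A_W J_T^\perp$ need not be symmetric, the clean operator-norm bound on eigenvalues used in Lemma~\ref{lem:eml} no longer applies, and one must instead control the trace through the column-by-column Cauchy--Schwarz argument and the Frobenius norms above. A secondary technical point is the bookkeeping when some $D_v$ is singular: there $D^{\dagger/2}$ inverts $D$ only on $\im D$, and one must verify that restricting every construction to $(\ker D)^\perp$ leaves both $E(S,T) = J_S^T\tilde A_W J_T$ and $\tilde A_W\Phi = \Phi$ intact.
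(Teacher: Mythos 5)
Your proposal is correct and is essentially the paper's own proof: your $J_S = D^{1/2}I_S$ is exactly the paper's $\psi_S$, your projection term $PJ_S = \Phi\Phi^T J_S$ equals the paper's explicit $\psi\vol(G)^{-1}\vol(S)$, and the key identity $E(S,T) - V(S,T) = (J_S^\perp)^T \tilde A_W J_T^\perp$ followed by the Cauchy--Schwarz/Frobenius-norm trace bound is precisely the argument given there. Your closing observations are also apt: the paper likewise only obtains a trace bound (not an eigenvalue bound) for exactly the reason you identify, and the $\abs{\tilde\mu_{k+1}}$ versus $\abs{\mu_{k+1}}$ discrepancy you flag is indeed a typo in the lemma statement.
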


\begin{proof}
  Define the $N_vk \times k$ matrix $\psi$ whose $k \times k$ blocks consist of
  the diagonal blocks of $D^{1/2}$. The columns of $\psi$ are all eigenvectors
  of $\tilde A$ with eigenvalue $1$. We further define the matrices $\psi_S$ and
  $\psi_T$, where the blocks of $\psi$ corresponding to vertices not in $S$ or
  $T$ have been set to zero. Then we have
  \[E(S,T) = I_S^T A I_T = \psi_S^T D^{-1/2}A D^{-1/2} \psi_T = \psi_S^T \tilde
    A \psi_T.\]

  We can also calculate $\vol(S)$ and $\vol(T)$ from $\psi_S$ and $\psi_T$:
  \[\vol(S) = I_S^TD I_S = \psi_S^T\psi_S = \psi_S^T \psi.\]
  Following the pattern from the proof of the regular expander mixing lemma, we
  decompose $\psi_S = \psi \vol(G)^{-1}\vol(S) + \psi_S^\perp$. These two
  terms satisfy a sort of orthogonality:
  \begin{multline*}
    (\psi_S^\perp)^T\psi \vol (G)^{-1}\vol(S) 
    = (\psi_S - \psi \vol(G)^{-1}\vol(S))^T \psi\vol(G)^{-1} \vol(S) \\= 
    \vol(S) \vol(G)^{-1} \vol(S) - \vol(S) \vol(G)^{-1} \vol(G) \vol(G)^{-1} \vol(S) = 0.
  \end{multline*}

  The individual columns of these two matrices do not satisfy a nice
  orthogonality relation, however, which means we will only be able to obtain a
  bound on the trace of $E(S,T)$, not its eigenvalues. We have
  \begin{align*}
    E(S,T) &= (\psi_S)^T \tilde A \psi_T \\
           &= (\psi \vol(G)^{-1}\vol(S) +
             \psi_S^\perp)^T \tilde A (\psi \vol(G)^{-1}\vol(T) + \psi_T^\perp)\\
           &= \vol(S) \vol(G)^{-1} \psi \tilde A \psi \vol(G)^{-1} \vol(T) + (\psi_S^\perp)^T \tilde A \psi_T^\perp\\
           &= \vol(S) \vol(G)^{-1}\vol(T) + (\psi_S^\perp)^T \tilde A \psi_T^\perp,
  \end{align*}
  and hence
  \begin{equation}\label{eqn:irregemlhalfway}
    E(S,T) - \vol(S)\vol(G)^{-1}\vol(T) = (\psi_S^\perp)^T \tilde A \psi_T^\perp.
  \end{equation}
  Taking the trace and absolute value gives
  \begin{align*}
    \abs{\tr(E(S,T)-\vol(S)\vol(G)^{-1}\vol(T)}
        &\leq \abs{\tr((\psi_S^\perp)^T \tilde A \psi_T^\perp)} \\
        &\leq \norm{\psi_S^\perp}_F\norm{\tilde A \psi_T^\perp}_F \\
        &\leq \abs{\tilde{\mu}_{k+1}} \norm{\psi_S^\perp}_F\norm{\psi_T^\perp}_F.
  \end{align*} 

  The norms in this formula are, e.g.,
  \begin{align*}
    \norm{\psi_S^\perp}_F &= \tr\left[  (\psi_S - \psi \vol(G)^{-1}\vol(S))^T(\psi_S - \psi \vol(G)^{-1}\vol(S))\right]\\
    &= \tr[\vol(S) + \vol(S) \vol(G)^{-1}\vol(S) - \vol(S) \vol(G)^{-1} \vol(S) - \vol(S) \vol(G)^{-1} \vol(S)] \\
    &= \tr\left[  \vol(S) - \vol(S) \vol(G)^{-1}\vol(S)\right].
  \end{align*}

  Combining these calculations gives the inequality~\eqref{eqn:irregulareml}.
\end{proof}
In the case that $G$ is actually regular, this inequality is looser
than~\eqref{eqn:emltrace}. It amounts to replacing, e.g. $\sum_{i=1}^k
\mu_{k+i}$ with $k\abs\mu_{k+1}$ in that formula. 

\section{Isoperimetric Inequalities}

The expander mixing lemma is one canonical inequality comparing combinatorial
measures of expansion (the density of edges between two subsets of vertices)
with spectral measures of expansion (the largest nontrivial eigenvalue of the
adjacency matrix). Another important inequality is the Cheeger inequality,
which connects the Cheeger constant of a graph with the second eigenvalue of the
(normalized) Laplacian. Letting $h(S) = \frac{E(S,V \setminus
  S)}{\min(\vol(S),\vol(V\setminus S))}$ and $h_G = \min_{S} h(S)$, the Cheeger
inequality states that
\begin{equation}\label{eqn:cheeger}
  \frac{\tilde{\lambda}_2}{2} \leq h_G \leq \sqrt{2\tilde{\lambda}_2},
\end{equation}
where $\tilde{\lambda}_2$ is the second-smallest eigenvalue of the normalized Laplacian
of $G$ \cite[ch. 2]{chung_spectral_1992}. This is known as an isoperimetric
inequality, due to the analogy with the classical problem of controlling the
perimeter of a subset of $\R^2$ in terms of its area. Here, the perimeter is
represented by the (weighted) number of edges leaving a subset of vertices,
while the area of that subset is given by the sum of vertex degrees. In the case
of a $d$-regular graph, this is simply proportional to the number of vertices.

A generalization of the Cheeger constant to matrix-weighted graphs is most
straightforward for $dI$-regular weightings, as this simplifies the
interpretation of the denominator. The correct generalization of this ratio is
unclear for irregular graphs. For a subset $S$ of vertices of a
$dI$-regular matrix-weighted graph, we define two Cheeger ratios:
\begin{align}
  h^{\tr}(S) &= \frac{\tr{E(S,V\setminus S)}}{d\min(\abs{S},\abs{V\setminus S})} \\
  h^{\preceq}(S) &= \frac{E(S,V\setminus S)}{d \min(\abs{S},\abs{V \setminus S})}.
\end{align}
These lead to two Cheeger constants
\begin{align}
  h^{\tr}_G &= \min_{S \subseteq V} h^{\tr}(S) \\
  h^{\preceq}_G &= \inf_{S \subseteq V} h^{\preceq}(S).
\end{align}

This second Cheeger constant is defined as an infimum in the set of symmetric
positive semidefinite matrices under the Loewner order, where $A \preceq B$ if
$B - A$ is positive semidefinite. Since this is only a partial order, there may
not exist a set $S$ of vertices such that $h^{\preceq}_G = h^{\preceq}(S)$.

\begin{prop}
  Let $(G,W)$ be a $dI$-regular matrix-weighted graph with $k \times k$ weight
  matrices. Then
  \begin{align}
    h^{\tr}_G &\geq \frac{1}{2d} \sum_{i=1}^k \lambda_{k+i} \\
    h^{\preceq}_G &\succeq \frac{\lambda_{k+1}}{2d}I,
  \end{align}
  where $0 = \lambda_1 = \dots = \lambda_k \leq \lambda_{k+1} \leq \dots$ are
  the eigenvalues of the Laplacian of $(G,W)$.
\end{prop}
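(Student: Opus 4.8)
The plan is to mimic the easy half of the scalar Cheeger inequality---feed a shifted indicator of $S$ into the Laplacian Rayleigh quotient---while letting the test cochain carry a direction $a \in \R^k$. For a vertex subset $S$ and a vector $a$, set
\[
  f = \left(\indv{S} - \tfrac{\abs{S}}{n}\indv{}\right)\otimes a,
\]
so that $f_v = (1 - \abs{S}/n)\,a$ for $v \in S$ and $f_v = -\tfrac{\abs{S}}{n}a$ for $v \notin S$. Read one way this single construction yields the Loewner bound; read across an orthonormal basis of directions it yields the trace bound.

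First I would record three computations. Since $\ip{\indv{S} - \tfrac{\abs{S}}{n}\indv{},\indv{}} = 0$, the cochain $f$ is orthogonal to every constant function $\indv{}\otimes b$. Because $f_u - f_v$ equals $\pm a$ on edges crossing the cut $(S, V\setminus S)$ and vanishes on all other edges, the Laplacian quadratic form collapses to
\[
  \ip{f, L f} = \sum_{e \text{ crossing}} \ip{a, W_e a} = \ip{a, E(S, V\setminus S)\, a},
\]
and the usual scalar computation gives $\norm{f}^2 = \norm{a}^2\,\abs{S}\abs{V\setminus S}/n$.

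The constant functions form a $k$-dimensional eigenspace at the bottom of the spectrum of $L$ (eigenvalue $0$), so Courant--Fischer's deflation formula, applied with the constants playing the role of the first $k$ eigenvectors, gives $\lambda_{k+1} \le \ip{f,Lf}/\norm{f}^2$ for our $f \perp \text{constants}$; this is uniform in whether or not $\lambda_{k+1} = 0$. Rearranging and using $\abs{S}\abs{V\setminus S}/n \ge \tfrac12 \min(\abs{S},\abs{V\setminus S})$ yields, for every $a$,
\[
  \ip{a, E(S,V\setminus S)\,a} \ge \tfrac{\lambda_{k+1}}{2}\min(\abs{S},\abs{V\setminus S})\,\norm{a}^2 .
\]
As this holds for all $a$, it is exactly the matrix inequality $E(S,V\setminus S)\succeq \tfrac{\lambda_{k+1}}{2}\min(\abs{S},\abs{V\setminus S})\,I$, i.e. $h^{\preceq}(S)\succeq \tfrac{\lambda_{k+1}}{2d}I$; since this lower bound is independent of $S$, it descends to the infimum $h^{\preceq}_G$. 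For the trace bound I would instead run the argument on the $k$ cochains $f^{(i)} = (\indv{S} - \tfrac{\abs{S}}{n}\indv{})\otimes e_i$, which have equal norm and are mutually orthogonal and orthogonal to the constants; the sum form of Courant--Fischer (Ky Fan), exactly as in Proposition~\ref{prop:laplaciantracebound}, gives $\sum_{i=1}^k\lambda_{k+i} \le n\,\tr E(S,V\setminus S)/(\abs{S}\abs{V\setminus S})$, and the same conversion to $\min(\abs{S},\abs{V\setminus S})$ then yields $h^{\tr}(S)\ge \tfrac{1}{2d}\sum_{i=1}^k\lambda_{k+i}$, hence the bound on $h^{\tr}_G$. (Alternatively, the trace bound follows by applying the scalar Cheeger inequality to the $dk$-regular graph $(G,\tr W)$ together with Proposition~\ref{prop:laplaciantracebound}.)

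The main obstacle is conceptual rather than computational: upgrading a scalar Rayleigh-quotient estimate to a statement in the Loewner order. The right device is to keep $a$ as a free parameter in the test cochain and invoke $M \succeq cI \iff \ip{a, Ma} \ge c\norm{a}^2 \text{ for all } a$, so that the direction $a$ is precisely what promotes the scalar bound to a $\preceq$-inequality. One should also note that $h^{\preceq}_G$ is an infimum in a partial order that need not be attained, so the conclusion is obtained by producing $\tfrac{\lambda_{k+1}}{2d}I$ as a common lower bound for all the $h^{\preceq}(S)$ rather than by optimizing over a single set.
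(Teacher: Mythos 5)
Your proposal is correct. For the Loewner-order bound it is essentially the paper's own argument: the paper tests $L$ against $x^S \otimes I$, where $x^S$ is, up to scale (and up to a sign slip in the paper---as written its $x^S$ is not orthogonal to $\indv{}$; the entries on $V\setminus S$ should be $-\abs{S}$), exactly your $\indv{S} - \frac{\abs{S}}{n}\indv{}$, and it invokes Courant--Fischer in the matrix form $\lambda_{k+1} I \preceq (y\otimes I)^T L (y\otimes I)/\norm{y}^2$; quantifying over a free direction $a$, as you do, is just the vectorized restatement of that matrix inequality. The genuine difference is in the trace bound: the paper runs no Rayleigh-quotient argument there at all, but instead observes that $\tr E(S,V\setminus S)$ is the scalar cut weight of $(G,\tr W)$, applies the standard scalar Cheeger inequality to that graph, and then converts $\lambda_2(L_{\tr W})$ into $\sum_{i=1}^k \lambda_{k+i}(L_W)$ via Proposition~\ref{prop:laplaciantracebound}---which is precisely your parenthetical alternative. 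Your primary route (Ky Fan applied to the $k$ coordinate cochains $f^{(i)}$) is equally valid and has the merit of deriving both halves from the single construction, relying only on the same generalized Courant--Fischer step already used in Proposition~\ref{prop:laplaciantracebound}; the paper's route is shorter because it reuses the scalar Cheeger inequality and the trace comparison as black boxes. You are also more careful than the paper about the degenerate case: your remark that the deflation step is legitimate whether or not $\lambda_{k+1}=0$ (i.e., even when $\ker L$ has dimension greater than $k$, so that the constants need not exhaust the kernel) is exactly the point needed to make the argument uniform.
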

\begin{proof}
  The first inequality is a direct consequence of the relationship between
  $(G,W)$ and $(G,\tr W)$ given in Proposition~\ref{prop:laplaciantracebound}.
  Since $\tr(E(S,V\setminus S))$ is equal to the total weight of edges between
  $S$ and $V \setminus S$ in $(G,\tr W)$, we apply the standard Cheeger bound to
  obtain, for every $S$, $h^{\tr}(S) \geq \frac{1}{2d} \lambda_{2}(\tr W)$.
  We then apply the relation $\lambda_2(\tr W) \geq \sum_{i=1}^k \lambda_{k+i}$
  to obtain the bound.

  The second bound is only slightly more involved.
  For a vertex subset $S$ of $G$, we let $x^S\in \R^{V}$ be the vector
  with
  \[x^S_v = \begin{cases}
      \abs{V \setminus S} & v \in S \\
      \abs{S} & v \notin S
    \end{cases}.\]
  Then $x$ is orthogonal to the constant vector $\indv{}$ and if $\abs{S} <
  \abs{V\setminus S}$,
  \[\frac{(x \otimes I)^TL(x\otimes I)}{x^Tx} = \frac{E(S,V\setminus
      S)}{\abs{S}\abs{V\setminus S}} \preceq 2 d h^{\preceq}(S).\]
  Meanwhile, the Courant-Fischer theorem implies that for any $y \in \R^{V}$
  orthogonal to $\indv{}$,
  \[\lambda_{k+1} I \preceq \frac{(y \otimes I)^T L (y
      \otimes I)}{\norm{y}^2}.\]
  Taking the infimum over the relevant sets, we then have
  \[\frac{\lambda_{k+1}}{2d} I \preceq \frac{1}{2d}\inf_{y \perp \indv{}}\frac{(y \otimes I)^T L (y
      \otimes I)}{\norm{y}^2} \preceq \inf_{S\subset V} \frac{1}{2d}\frac{(x^S \otimes
      I)^TL(x^S\otimes I)}{\norm{x^S}^2} \preceq h^{\preceq}_G.\]
\end{proof}

These bounds correspond to the easy-to-prove side of the standard Cheeger
inequality. Unfortunately, analogous upper bounds on $h_G$ in terms of the
spectrum of $L$ do not exist. Specifically, there are no upper bounds of the
form $h^{\tr}_G \leq f(\lambda_{2k}),$ where $f(0) = 0$, nor of the form
$h^{\preceq}_G \preceq F(\lambda_{2k})$, where $F$ is the zero matrix
when $\lambda_{2k} = 0$. To see this, consider the
matrix-weighted graph $G$ in Figure~\ref{fig:cheegercounterexample}.
\begin{figure}
  \begin{centering}
\includegraphics[width=1.7in]{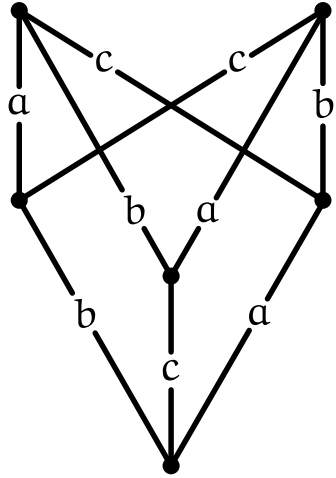} 
\caption{A counterexample to a spectral upper
  bound on the matrix-weighted Cheeger constants}\label{fig:cheegercounterexample}
\end{centering}
\end{figure}
The weight matrices correspond to the edge labels as follows:
\begin{equation}\label{eqn:framelabels}
  a: \,\,\begin{bmatrix} 1 & 0 \\ 0 &
    0\end{bmatrix} \qquad 
  b: \,\, \begin{bmatrix} \frac{1}{4} & \frac{\sqrt{3}}{4} \\
    \frac{\sqrt{3}}{4} & \frac{3}{4} \end{bmatrix}  \qquad
  c: \,\, \begin{bmatrix}\frac{1}{4} & -\frac{\sqrt{3}}{4} \\
    -\frac{\sqrt{3}}{4} & \frac{3}{4} \end{bmatrix}.
\end{equation}

This graph is regular and has algebraic degree $\frac{3}{2}$. Any two of these
weight matrices sum to a full-rank matrix, and removing any set of edges with
the same weights leaves a connected graph. Therefore, for any set $S$ of
vertices of $G$, $E(S,V\setminus S)$ is full rank. Thus we have $h_G^{\preceq}
\succeq \alpha I$ for some $\alpha > 0$ and $h_G^{\tr} > 0$. However, we can
calculate that the zero eigenvalue of the Laplacian of $G$ has multiplicity
four, so $\lambda_{2k} = 0$, meaning that our putative spectral upper bound on
$h_G$ must be zero. The conclusion to be drawn is that unlike the case for
scalar-weighted graphs, combinatorial measures of expansion in matrix-weighted
graphs are in general weaker than spectral measures of expansion. One cannot
ensure that eigenvalues of the matrix-weighted Laplacian are bounded away from
zero by controlling a Cheeger constant (at least one of the form we have considered).

\section{Expander Sheaves}
These expansion-related bounds for matrix-weighted graphs suggest that we
attempt to generalize expander graphs to the matrix-weighted setting. Expander
graphs are typically defined as unweighted graphs, so a generalization allowing
matrix weights may seem slightly contradictory. However, many constructions of
expander graphs end up producing graphs which may have multiple edges between a
pair of vertices, which amounts to allowing positive integer weights. One may
think of this as allowing a sequence of combinatorial decisions about
where to place edges in the graph. We extend this to the matrix-weighted setting
by adding an extra choice: that of a subspace of $\R^k$ for each edge. Such a
subspace might be generated by iteratively choosing atomic elements of the
lattice of subspaces of $\R^k$.

A precise definition is as follows:
\begin{defn}
  Let $(G,W)$ be a $d$-regular matrix-weighted graph. We say that it is a
  \introduce{matrix-weighted $\eta$-expander} if all its weight matrices are
  orthogonal projections $\R^k \to \R^k$ and all nontrivial eigenvalues of its
  adjacency matrix are at most $d-\eta$ in magnitude.
\end{defn}

 There is the immediate question of how to construct a regular matrix-weighted
graph with projection-valued weights, regardless of its spectral properties. The
trivial example is obvious: take a regular unweighted graph, and assign each
edge the identity matrix. A more interesting approach is to note that the
condition that $d_v = \sum_{v \face e} W_e = dI$ is the same as the condition
for the relevant matrices $W_e$ to form a \introduce{tight fusion frame} with
frame constant $d$. Fusion
frames are a generalization of the notion of frame from harmonic analysis~\cite{casazza_introduction_2013}. They
are typically defined as collections of subspaces of $V_i \leq \R^k$ such that any vector
$x \in \R^k$ is uniquely determined by its projections onto $V_i$ for all $i$.
Equivalently, a fusion frame may be defined as a collection of orthogonal
projections on $\R^k$ that sum to an invertible operator. Tight fusion frames are
those for which these orthogonal projections sum to a scalar multiple of the
identity.

It is a nontrivial result that tight fusion frames
exist~\cite{casazza_constructing_2011}. In particular, for $r \geq \lceil
\frac{k}{\ell} \rceil + 2$, there exists a tight fusion frame in $\R^k$
consisting of $r$ subspaces of dimension $\ell$, while for $r \leq \lceil
\frac{k}{\ell} \rceil$, no tight fusion frames of this form exist.

We can use a nontrivial fusion frame to construct nontrivial matrix-weighted
graphs with projection-valued weights. Let $G$ be an $r$-regular graph with an
$r$-edge coloring, and take a tight fusion frame in $\R^k$ with $r$ subspaces of
dimension $\ell$.
Assign one element of the fusion frame to each edge color of $G$; these will
become the matrix weights. The resulting matrix-weighted graph has degree
$\frac{r\ell}{k}$. Note that this degree may not be an integer.

A matrix-weighted graph constructed in this way need not have any particular
expansion properties. Indeed, its Laplacian may have a large kernel. However,
nontrivial individual examples of these matrix-weighted expanders do exist.
Consider the graph shown in Figure~\ref{fig:nontrivialexpander}. The underlying
graph is 4-regular, and is 4-edge colored. The weights are given by the matrices
in~\eqref{eqn:framelabels}, with $d$ corresponding to the identity matrix. Thus,
the four-element fusion frame used is given by three one-dimensional subspaces
in $\R^2$ together with $\R^2$ itself. The resulting matrix-weighted graph is
regular, with algebraic degree $\frac{5}{2}$. Numerical calculations show that
the nontrivial adjacency eigenvalues of this graph lie between $-2.406$ and
$1.803$, giving it a two-sided expansion constant of $\eta = 0.094$. While this
particular expansion constant is nothing to write home about, significantly
better expansion may be possible in general.

\begin{figure}
  \begin{centering}
  \includegraphics[width=3in]{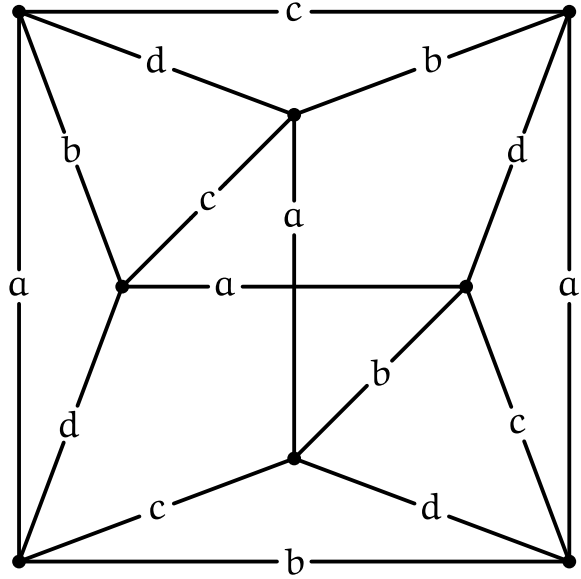}
  \caption{A nontrivial matrix-weighted expander}\label{fig:nontrivialexpander}
  \end{centering}
\end{figure}

The Alon-Boppana bound~\cite{nilli_second_1991} gives a constraint on the
spectral expansion of an infinite family of graphs. The second adjacency
eigenvalue $\mu_2$ of a $d$-regular graph is bounded bel ow by $2 \sqrt{d-1} -
o(1)$. Is there a similar bound for matrix-weighted graphs? Take a $r$-regular
graph with $k\times k$ matrix weights which are orthogonal projections of rank
$\ell$, and hence has matrix-degree $\frac{r\ell}{k} I$. If we take the trace of
weights, we get a scalar-weighted graph whose edge weights are all $\ell$. Its
adjacency matrix is $\ell$ times the adjacency matrix of the underlying graph.
The Laplacian trace bound~\eqref{prop:laplaciantracebound} implies that
$k \mu_2(A_W) \geq \ell \mu_2(A_G)$, so
\[\mu_2(A_W) \geq 2 \frac{\ell}{k} \sqrt{r-1} - o(1).\]

The algebraic degree of this matrix-weighted graph is $d = \frac{r\ell}{k}$, so
the bound is $\mu_2(A_W) \geq 2 \frac{d}{r} \sqrt{r-1}$. For $2 < d < r$, 
\[\frac{\sqrt{r - 1}}{r} \leq \frac{\sqrt{d-1}}{d},\]
and so $2
\frac{d}{r}\sqrt{r-1} \leq 2 \sqrt{d-1}$. Since this bound is less restrictive
on $\mu_2$, it may be possible for a family of matrix-weighted expander graphs
to exhibit better-than-Ramanujan expansion for a given algebraic degree. To be
clear, we have not here shown that this is the case; we have only failed to rule
it out using the arguments that apply to standard graphs. However, other
approaches to extending the Alon-Boppana bound to matrix-weighted graphs give
the same results. 

Such a property may be useful for the design of communications networks. Expander
graphs were initially introduced in part to study the design of fault-tolerant
networks. They have since found use in the design of distributed consensus
algorithms. The convergence rate of the consensus depends on the spectral
properties of the network, and hence Ramanujan graphs are optimal for a given
amount of communication. The algebraic degree of a matrix-weighted expander
represents the total amount of communication a node must carry on with its
neighbors in order to advance another step in the algorithm. Better expansion
constants for a given algebraic degree mean faster convergence for the same
amount of communication.

\section{Conclusion}

Matrix-weighted graphs are an expressive generalization of undirected graphs,
and expand the concern of spectral graph theory to operators acting on
higher-dimensional spaces of functions. Expansion in matrix-weighted graphs has
more subtle behavior than in standard graphs. We have shown that spectral
measures of expansion control combinatorial measures of expansion, as in the
expander mixing lemma and one side of the Cheeger inequality. However, we do not
have a converse combinatorial condition for a matrix-weighted graph to have good
spectral expansion.

There is a converse to the expander mixing lemma for scalar-weighted
graphs~\cite{bilu_lifts_2006}. Its proof was a byproduct of a construction of
families of expander graphs with nearly optimal spectral expansion. It would be
interesting to know whether a converse similarly holds for matrix-weighted
graphs. This would offer some level of control over the spectral properties of
matrix-weighted graphs in terms of a combinatorial measure of expansion. The
failure to exist of a spectral upper bound on the Cheeger constant suggests that a
converse to the expander mixing lemma may be similarly false.

The problem of constructing infinite families of matrix-weighted expanders
offers many interesting challenges. Standard methods for constructing expander
graphs do not readily generalize to the matrix-weighted case. Even the problem
of choosing kernels of weights so that the Laplacian kernel has dimension
$k$---what in the sheaf theoretic language might be termed an ``approximation to
the constant sheaf''---is a subtle problem. Solving these combinatorial problems
will require insights about graphs, lattices of subspaces, and fusion frames. 

\bibliographystyle{alpha}
\bibliography{sheafspectra}

\end{document}